\newtheorem{theorem}{Theorem}[section]
\theoremstyle{definition}
\theoremstyle{remark}
\newtheorem{remark}{Remark}[section]
\numberwithin{equation}{section}
\newcommand{\p}{\partial}
\newcommand{\fei}{\varphi}
\newcommand{\rmnum}[1]{\mathrm{\romannumeral #1}}
\newcommand{\Rmnum}[1]{\mathrm{\expandafter\@slowromancap\romannumeral#1@}}
\begin{document}

\title[Uniqueness and Instability of Subsonic--Sonic Flow]{Uniqueness and Instability of
Subsonic--Sonic Potential Flow in A Convergent Approximate Nozzle}

\author{Pan Liu and  Hairong Yuan}

\address{P. Liu:
Department of Mathematics, East China Normal University, Shanghai
200241, China} \email{pliu@math.ecnu.edu.cn}
\address{H. Yuan (Corresponding author):
Department of Mathematics, East China Normal University, Shanghai
200241, China} \email{hryuan@math.ecnu.edu.cn;\ \
hairongyuan0110@gmail.com}

\keywords{Hopf lemma, degenerate elliptic operators, potential flow,
surface, subsonic, sonic, nozzle.}

\subjclass[2000]{35J70, 35B50, 76H05}

\date{\today}

\begin{abstract}
We proved uniqueness and instability of the symmetric
subsonic--sonic flow solution of the compressible potential flow
equation in a surface with convergent areas of cross--sections. Such
a surface may be regarded as an approximation of a two--dimensional
convergent nozzle in aerodynamics. Mathematically these are
uniqueness and nonexistence results of a nonlinear degenerate
elliptic equation with Bernoulli type boundary conditions. The proof
depends on maximum principles and a generalized Hopf boundary point
lemma which was proved in the paper.
\end{abstract}

\maketitle


\section{Introduction}

This paper is a continuation of our study on transonic flows in
nozzles \cite{Yu1,YH}. A fundamental difficulty to this problem is
the lack of  special solutions in a physical nozzle. However, as
motivated by \cite{SS}, it is rather easy to construct various
physically interesting special solutions, such as those
(supersonic--subsonic) transonic shocks and (subsonic--supersonic)
transonic flows observed in physical nozzles \cite{CoF}, for the
steady compressible Euler equations in certain Riemannian manifolds
\cite{Yu1,YH}. Therefore these manifolds may be regarded as
approximate nozzles and the study of the special flows in them would
be of certain importance for the understanding of flows in physical
nozzles. In the previous works \cite{CY,LY1,LY}, we and our
collaborators have studied stability and uniqueness of transonic
shocks for the potential flow equation or the complete Euler system,
which involve free boundary problems of uniformly elliptic equations
(for potential flow equation) or elliptic--hyperbolic composite
system (for Euler system). In \cite{Yu1,YH} we construct special
transonic flow solution and show its stability for potential flow
equation, which is a small perturbation result on nonlinear
equations of mixed type. The present paper is devoted to the
uniqueness and instability of subsonic--sonic flow in a convergent
approximate nozzle in the class of all $C^2$ functions satisfying
certain reasonable restrictions. These are global results on
degenerate elliptic equations. We first formulate the problem.\\

Let $\mathbf{S}^1$ be the standard unit circle in $\mathbf{R}^2$,
and $\mathcal{M}$ the Riemannian manifold $\{(x,y)\in
(0,1)\times\mathbf{S}^1\}$ with a metric $G=dx\otimes
dx+n(x)^2dy\otimes dy.$ Here $n(t)$ is a positive smooth function on
$[0,1]$ satisfies:
\begin{center}
$(H_1)$\qquad\qquad  $(\rmnum{1})$ $n''(t)>0;$\ \  $(\rmnum{2})$
$n'(t)<0$ for $t\in(0,1)$, $n'(1)=0.$
\end{center}
Without loss of generality, we also assume that $n(1)=1.$ We call
$\mathcal{M}$ a convergent approximate nozzle. In the following, we
take $(x,y)\in M=(0,1)\times(0,2\pi)$ as a coordinate system in
$\mathcal{M}$. Note that on $y=0$ and $y=2\pi$ there should pose
periodic conditions.

The potential flow equation governing steady irrotational isentropic
perfect gas flows in $\mathcal{M}$ is (c.f. \cite{YH})
\begin{eqnarray}\label{0101}
&&n(x)^2(c^2-(\p_1\varphi)^2)\p_{11}\varphi-2\p_1\varphi\p_2\varphi\p_{12}\varphi
+\left(c^2-\frac{1}{n(x)^2}(\p_2\varphi)^2\right)\p_{22}\varphi\nonumber\\
&&\qquad\qquad
+n(x)n'(x)\left(c^2+\frac{1}{n(x)^2}(\p_2\varphi)^2\right)\p_1\varphi=0.
\end{eqnarray}
(Here and hereafter we set $\p_1=\p_x,\ \p_2=\p_y$.) Direct
computation yields that this equation is of elliptic type if the
flow is subsonic ($c^2>(\p_1\varphi)^2+(\p_2\varphi)^2/n(x)^2),$ and
degenerate if the flow is sonic
($c^2=(\p_1\varphi)^2+(\p_2\varphi)^2/n(x)^2).$ We may compute the
speed of sound $c$  by the Bernoulli's law
\begin{eqnarray}\label{0102}
\frac{c^2}{\gamma-1}+\frac{(\p_1\varphi)^2+(\p_2\varphi)^2/n(x)^2}{2}
=\frac{c_0}{\gamma-1},
\end{eqnarray}
with $\gamma>1$ and $c_0>0$ two given constants. Let $p,\ \rho$ be
the pressure and density of the flow respectively. We use
$p=\rho^\gamma$ as the state function and then
$c=\sqrt{\gamma\rho^{\gamma-1}}.$

We call $\Sigma_{k}=\{k\}\times \mathbf{S}^1,\ k=0,1,$ respectively
the entry and exit of the approximate nozzle $\mathcal{M}.$
Obviously $\p\mathcal{M}=\Sigma_{0}\cup\Sigma_1$. In the following
we assume that
\begin{center}
$(H_2)$\qquad\qquad  $\p_1\fei\ge0$\ \ on both $\Sigma_0$ and
$\Sigma_1$,
\end{center}
which means exactly that the gas flows in $\mathcal{M}$ on
$\Sigma_0$ and flows out $\mathcal{M}$ on $\Sigma_1.$

We study subsonic--sonic flows in $\mathcal{M}$. The boundary
conditions are
\begin{eqnarray}
&(\p_1\varphi)^2+\frac{(\p_2\varphi)^2}{n(0)^2}=b_0^2 & \text{on}
\ \ \Sigma_0,\label{0103}\\
&(\p_1\varphi)^2+(\p_2\varphi)^2=b_1^2:=\frac{2c_0}{\gamma+1} &
\text{on}\ \ \Sigma_1, \label{0104}
\end{eqnarray}
where $b_0,\, b_1$ are positive constants. Note that \eqref{0103}
and \eqref{0104} are Bernoulli type conditions and the choice of
$b_1$ shows that the flow is sonic at the exit. We choose $b_0$ to
satisfy the algebraic equation
\begin{eqnarray}
b_0^{\gamma-1}(c_0-\frac{\gamma-1}{2}b_0^2)n(0)^{\gamma-1}=b_1^{\gamma+1}
\end{eqnarray}
and $b_0<b_1$ so that the flow is subsonic at the entry. It is
straightforward to show that there exists such a unique $b_0$. We
pose these nonlinear Bernoulli conditions rather than Dirichlet or
Neumann conditions, since from physical point of view, prescribing
density (pressure) at the entry and exit are more reasonable
\cite{CY,CoF}. By Bernoulli's law \eqref{0102}, this consideration
leads to \eqref{0103} and \eqref{0104}.\\

Now suppose the flow is symmetric, that is, depends only on $x$.
Then \eqref{0101}--\eqref{0104} are reduced to this problem of
$u(x)=\varphi'(x)$:
\begin{eqnarray}
&\p_x(n\rho u)=0, & x\in [0,1],\\
&{\displaystyle \frac{\gamma
\rho^{\gamma-1}}{\gamma-1}+\frac{u^2}{2}=\frac{c_0}{\gamma-1}}, & x\in [0,1],\\
& u=b_0, & x=0,\\
& u=b_1, & x=1.
\end{eqnarray}
Under the above choice of $b_0$, the velocity $u$ can be easily
solved and the obtained flow is subsonic in $\mathcal{M}\cup
\Sigma_0$, sonic on $\Sigma_1$, and always accelerates in
$\mathcal{M}$ (i.e., $u'(x)=\fei''(x)>0$). Therefore we may choose a
potential function $\fei_b=\fei_b(x)$ with $\fei_b'=u$  being a
special solution to problem \eqref{0101}--\eqref{0104}.

Apart from stability, it arises also naturally the question that
whether this special solution is unique in the large. That is, under
suitable assumptions, whether any solution to
\eqref{0101}--\eqref{0104}, modulo a constant, must be the $\fei_b$
obtained above. The following results provide a positive answer to
the uniqueness, but a negative one to the stability.

\begin{theorem}\label{thm1}
$(\rmnum{1})$\ The only $C^2(\bar{\mathcal{M}})$ solution to problem
\eqref{0101}--\eqref{0104} which satisfies $(H_2)$  and
\begin{eqnarray}\label{acc}
&\p_{11}\fei>0 & \text{on}\ \Sigma_1
\end{eqnarray}
is the symmetric solution $\fei_b$ modulo a constant.

$(\rmnum{2})$\ Under the same assumptions as in $(\rmnum{1})$, but
replacing \eqref{0103} by
\begin{eqnarray}
(\p_1\varphi)^2+\frac{(\p_2\varphi)^2}{n(0)^2} =B(y) & \text{on}\ \
\Sigma_0,\label{newentry}
\end{eqnarray}
with $B(y)$ a positive function not identical to $b_0^2$ and
satisfying either $B(y)\ge b_0^2$ or $B(y)\le b_0^2,$ then there
will be no any $C^2$ solution to the problem \eqref{0101},
\eqref{0102}, \eqref{newentry} and \eqref{0104}.
\end{theorem}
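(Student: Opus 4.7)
The plan is to reduce both parts to the behavior of $w:=\varphi-\varphi_{b}$ by linearizing around $\varphi_{b}$, and then to exploit the strong maximum principle together with the generalized Hopf lemma (the main technical tool the paper promises) at the two components of $\partial\mathcal{M}$.

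First I will write the potential flow equation \eqref{0101} as $F(x,\nabla\varphi,\nabla^{2}\varphi)=0$ and, since both $\varphi$ and $\varphi_{b}$ are $C^{2}$ solutions, apply the fundamental theorem of calculus in $t$ to $F$ evaluated on the family $\varphi_{b}+tw$, $t\in[0,1]$, to produce a linear equation $\mathcal{L}w=0$ in $\mathcal{M}$ with no zero-order term. Its principal symbol is that of \eqref{0101} averaged along the interpolation, so $\mathcal{L}$ is elliptic wherever the interpolated flow is strictly subsonic and degenerates on $\Sigma_{1}$ exactly as \eqref{0101} does. (A preliminary maximum-principle argument applied to $q=(\partial_{1}\varphi)^{2}+(\partial_{2}\varphi)^{2}/n(x)^{2}$ should establish strict subsonicity in $\mathcal{M}\cup\Sigma_{0}$, so $\mathcal{L}$ is genuinely elliptic there.) Subtracting the Bernoulli boundary conditions then gives, on $\Sigma_{0}$,
\[
\alpha_{0}\,\partial_{1}w+\frac{(\partial_{2}\varphi)^{2}}{n(0)^{2}}=B(y)-b_{0}^{2},\qquad \alpha_{0}:=\partial_{1}\varphi+\partial_{1}\varphi_{b},
\]
and, using $n(1)=1$, on $\Sigma_{1}$,
\[
\alpha_{1}\,\partial_{1}w+(\partial_{2}\varphi)^{2}=0,\qquad \alpha_{1}:=\partial_{1}\varphi+\partial_{1}\varphi_{b},
\]
where $\alpha_{0}\geq b_{0}$ on $\Sigma_{0}$ and $\alpha_{1}\geq b_{1}$ on $\Sigma_{1}$ by $(H_{2})$ together with the positivity of the symmetric velocity; in Part $(\rmnum{1})$ one takes $B\equiv b_{0}^{2}$, making the $\Sigma_{0}$ relation homogeneous.

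The core of the argument is a dichotomy based on the strong maximum principle. If $w$ is nonconstant then both its maximum and its minimum lie on $\partial\mathcal{M}$. Suppose the maximum is attained at $p_{0}\in\Sigma_{0}$. Since $\Sigma_{0}$ is a circle, $p_{0}$ is an interior critical point of $w|_{\Sigma_{0}}$, so $\partial_{2}w(p_{0})=0$ and hence $\partial_{2}\varphi(p_{0})=0$, because $\varphi_{b}$ is $y$-independent. In Part $(\rmnum{1})$ the $\Sigma_{0}$ relation then forces $\partial_{1}w(p_{0})=0$, while classical Hopf at the uniformly elliptic boundary $\Sigma_{0}$ gives $\partial_{\nu}w(p_{0})>0$ with $\nu=-\partial_{x}$, i.e.\ $\partial_{1}w(p_{0})<0$; contradiction. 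The identical computation at a maximum $p_{1}\in\Sigma_{1}$ also yields $\partial_{1}w(p_{1})=0$, now contradicted by the generalized Hopf lemma at the sonic boundary, whose hypotheses are precisely what \eqref{acc} encodes. Replacing $w$ with $-w$ rules out minima in the same way, so $w$ is constant, which proves Part $(\rmnum{1})$. For Part $(\rmnum{2})$ I rerun the dichotomy: when $B\geq b_{0}^{2}$ (not identically), a maximum $p_{0}\in\Sigma_{0}$ now satisfies $\alpha_{0}\partial_{1}w(p_{0})=B(y_{0})-b_{0}^{2}\geq 0$, still contradicting the strict inequality $\partial_{1}w(p_{0})<0$ from classical Hopf, and maxima on $\Sigma_{1}$ are excluded exactly as in Part $(\rmnum{1})$; if $w$ were forced to be constant, then $\partial_{2}\varphi\equiv 0$ and the entry condition would force $B\equiv b_{0}^{2}$, contradicting the hypothesis. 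The case $B\leq b_{0}^{2}$ is symmetric once the same analysis is applied to the minimum of $w$.

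The main obstacle I foresee is not the linearization or the classical Hopf step, both of which are essentially routine, but checking that the generalized Hopf lemma genuinely applies to $\mathcal{L}$ at $\Sigma_{1}$: one must verify that the coefficient structure of $\mathcal{L}$ near the sonic boundary matches the hypotheses of the lemma (in particular the rate at which ellipticity is lost along $\partial_{x}$) and that \eqref{acc} translates into the strict transversal acceleration the lemma requires. A subsidiary but still nontrivial point is the preliminary step of showing strict subsonicity in $\mathcal{M}\cup\Sigma_{0}$, which underpins the ellipticity of $\mathcal{L}$ away from $\Sigma_{1}$.
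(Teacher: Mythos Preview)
Your overall architecture---form the difference with $\varphi_b$, obtain a linear equation with no zero-order term, rule out interior extrema by the strong maximum principle, rule out boundary extrema on $\Sigma_0$ by the classical Hopf lemma combined with the oblique boundary relation, and rule out boundary extrema on $\Sigma_1$ by the generalized Hopf lemma---matches the paper. The handling of Part~$(\rmnum{2})$ is also essentially the same.

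There is, however, a real gap in your linearization step. You propose to interpolate $\varphi_t=\varphi_b+tw$ and average, which produces a principal part $\int_0^1 a^{ij}(\nabla\varphi_t)\,dt$. For the strong maximum principle and the classical Hopf lemma to apply, you need this averaged matrix to be positive definite in $\mathcal{M}\cup\Sigma_0$, and for that you need every $\varphi_t$ (in particular $\varphi$ itself) to be strictly subsonic there. You flag this as a ``subsidiary'' preliminary step, but it is not clear it can be carried out at all: the equation for $\varphi$ is a priori of mixed type, so speed-maximum arguments for $q$ are not available without already knowing ellipticity, and the theorem is explicitly meant to hold \emph{without} any subsonicity hypothesis on $\varphi$. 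The paper sidesteps this completely by exploiting quasilinearity: writing
\[
a^{ij}(\nabla\varphi)\partial_{ij}\varphi-a^{ij}(\nabla\varphi_b)\partial_{ij}\varphi_b
= a^{ij}(\nabla\varphi_b)\,\partial_{ij}\psi+\bigl[a^{ij}(\nabla\varphi)-a^{ij}(\nabla\varphi_b)\bigr]\partial_{ij}\varphi,
\]
the second bracket is linear in $\nabla\psi$, so one obtains a linear equation for $\psi=\varphi_b-\varphi$ whose principal coefficients are \emph{exactly} $a^{ij}(\nabla\varphi_b)$. Since $\varphi_b$ is known to be subsonic--sonic, ellipticity in $\mathcal{M}\cup\Sigma_0$ and degeneracy precisely on $\Sigma_1$ are automatic, and your preliminary step disappears.

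On the $\Sigma_1$ side, you are right that the hypothesis of the generalized Hopf lemma is what \eqref{acc} is for, but the translation is not immediate: after the above linearization the relevant condition is $b^1<0$ on $\Sigma_1$, and since $n'(1)=0$ this reduces to $\tfrac{\gamma+1}{2}\partial_{11}\varphi+\tfrac{\gamma-1}{2}\partial_{22}\varphi>0$ there. This does not follow from $\partial_{11}\varphi>0$ alone; the paper obtains it by differentiating \eqref{0104} tangentially and combining with \eqref{0101} on $\Sigma_1$ to deduce $\Delta\varphi>0$, from which the needed inequality follows. You should include this computation rather than assert that \eqref{acc} ``encodes'' the hypothesis.
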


We remark that there is no any extra requirements such as the flow
to be subsonic in $\mathcal{M}$ to show the uniqueness. The only
assumption \eqref{acc} means that the flow is accelerating at the
exit. 
From the study of transonic flows \cite{K1}, and the non-uniqueness
results claimed by many authors based on numerical simulations (c.f.
\cite{So}), it seems that this assumption is necessary. Assertion
$(\rmnum{2})$ shows particularly that the subsonic--sonic flow is
not stable under perturbations of the density (or pressure, or
speed) of the gas at the entry, provided the flow is accelerating at
the exit, where it is sonic.       \\

The rest of this paper, Section 2, is devoted to the proof of
Theorem \ref{thm1}. The key point is to prove a version of Hopf
boundary point lemma applicable to points on characteristic
degenerate boundaries for degenerate elliptic equations, which is
Theorem \ref{thm2} stated in Section 2.1. Although there are many
impressive progresses in the study of degenerate elliptic equations
and mixed type equations in gas dynamics and other fields in these
years (see \cite{CDSW,CF,HH,Kim1,Kim2,K1,Mo1,OR,Z1} and references
therein), this generalized Hopf lemma seems to be new. It captures
the remarkable property that for subsonic--sonic flow or transonic
flow, the potential flow equation behaves like the heat equation
$-\p_1\fei+\p_{22}\fei=0$ near the sonic line (the line where the
equation is degenerate), and hence the lower order term $-\p_1\fei$
is essential in studying these degenerate elliptic or mixed type
equations (c.f. \cite{CF,YH}). This observation, obtained by
studying special subsonic--sonic flows and transonic flows in an
approximate nozzle, would be important for studying the flows in a
physical nozzle.

\section{Proof of Main Theorems}

\subsection{A generalized Hopf Lemma}
We first present a version of Hopf lemma
applicable to degenerate elliptic operators and characteristic
boundary points, which may be regarded as a generalization of
results like Lemma 3.4 in \cite{GT} or  Lemma 7.1.7 in \cite{Ta} and
is of independent interests.

Let $D$ be a bounded domain of class $C^2$ in $\mathbb{R}^n$ with
boundary $\p D$, and let
\begin{eqnarray}
L:=  \sum_{i,j =1}^n a^{ij}(x)\p_{ij} +  \sum_{i=1}^n b^i(x) \p_i +
c(x)
\end{eqnarray}
be a second order degenerate elliptic operator in $D$ (that is,
$a^{ij}(x)\xi_i\xi_j\ge0$ for all $\xi=(\xi_1, \cdots, \xi_n)\in
\mathbb{R}^n$ and $x\in D$). In what follows we assume that
$a^{ij}(x)$, $b^i(x)$, $c(x)$ are all bounded and continuous
functions in $D$. Let $P$ be a point on $\p D$ and assume, in a
neighborhood $U\subset\mathbb{R}^n$ of $P$, the boundary $\p D$ has
the form $\{x=(x_1, \cdots, x_n)\in\mathbb{R}^n: x_n - f(x_1,
\cdots, x_{n-1})=0\}$ with $f$ a $C^2$ function, and $U\cap D$ lies
in $\{x_n>f(x_1,\cdots,x_n)\}$. Then $\nu = (-\frac{\p f}{\p
x_1},\cdots, -\frac{\p f}{\p x_{n-1}},\, 1)$ is an interior normal
vector to $\p D$.  We have the following result.

\begin{theorem}\label{thm2}
Let $u$ be a $C^2(D)\cap C^1(\bar{D})$ function and attains a strict
minimum ${\mathrm{(}}$maximum${\mathrm{)}}$ at $P\in \p D$. Also
assume that, in a neighborhood $U\cap \bar{D}$ of $P$ we have $c=0$
and
\begin{eqnarray}\label{hopf}
 - \sum_{i=1}^{n-1} b^i \frac{\p f}{\p x_i}+ b^n - \sum_{i,j
=1}^{n-1} a^{ij}\frac{\p^2 f}{\p x_i \p x_j} > 0 \qquad \text{at}\ \
P.
\end{eqnarray}
Then if there holds
\begin{eqnarray}
Lu\le0 \ \ (\ge0)\qquad \text{in}\ \ D,
\end{eqnarray}
we have
\begin{eqnarray}
\frac{\p u}{\p \nu} > 0 \ \ (<0)\qquad \text{at}\ \  P.\label{a4}
\end{eqnarray}
If $c\le0$, the same conclusion holds provided that $u(P)\le0\,
(\ge0)$.
\end{theorem}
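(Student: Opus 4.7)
The plan is to use the boundary defining function itself as a one-sided barrier, leveraging hypothesis \eqref{hopf} as a substitute for the nondegeneracy of $L$ along the normal that the classical Hopf argument would otherwise require. Set $g(x) = x_n - f(x_1,\ldots,x_{n-1})$, so that $g>0$ on $U\cap D$, $g=0$ on $\partial D\cap U$, and $\nabla g(P) = \nu$. A direct computation, using $\partial_i g = -\partial_i f$ for $i<n$, $\partial_n g = 1$, $\partial_{ij} g = -\partial_{ij} f$ for $i,j<n$ with all remaining second derivatives vanishing, together with $c(P)=0$, shows that $Lg(P)$ is exactly the left-hand side of \eqref{hopf}. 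Hypothesis \eqref{hopf} and continuity of the coefficients then yield $Lg>0$ on some smaller neighborhood $U'\cap \bar D$ of $P$.

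Next I would form the comparison function $w = \epsilon g - (u - u(P))$ on $U'\cap D$ for a small parameter $\epsilon>0$ to be fixed. Since $c=0$ and $Lu\le 0$, one has
\[
Lw \;=\; \epsilon\, Lg - Lu \;\ge\; \epsilon\, Lg \;>\; 0 \qquad \text{in } U'\cap D.
\]
I would then invoke a strict-inequality version of the weak maximum principle: at any interior maximum $x_0$ of $w$, the gradient vanishes and $D^2 w(x_0)$ is negative semidefinite, so the positive semidefiniteness of $(a^{ij})$ forces $\sum a^{ij}\partial_{ij}w(x_0)\le 0$ and hence $Lw(x_0)\le 0$, contradicting the strict sign above. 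Therefore $w$ attains its maximum on $\partial(U'\cap D)$. On $\partial D\cap \overline{U'}$, one has $g=0$ and $u\ge u(P)$, so $w\le 0$; on the remaining piece of the boundary, which is a compact set bounded away from $P$, the \emph{strict} minimum hypothesis yields $u-u(P)\ge\delta>0$, so choosing $\epsilon<\delta/\max_{\overline{U'\cap D}} g$ again gives $w\le 0$. Hence $w\le 0$ throughout $U'\cap D$.

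Rearranging, $u(x)-u(P)\ge\epsilon\, g(x)$ near $P$; evaluating along the inward normal ray $P+t\nu$ and letting $t\to 0^+$ produces $\partial u/\partial\nu\ge\epsilon\,\nabla g(P)\cdot\nu = \epsilon|\nu|^2>0$, which is \eqref{a4}. The strict maximum case follows by applying the argument to $-u$. For the extension to $c\le 0$ with $u(P)\le 0$, the only modification is the additional term $c\,u(P)\ge 0$ appearing in $Lw$, which preserves $Lw>0$, while the maximum principle step is restricted to \emph{nonnegative} interior maxima where $cw\le 0$; the boundary bounds on $w$ again force $\max w\le 0$. The main obstacle, I expect, is precisely the point that motivated the lemma: the classical exponential barrier on an annulus requires strict ellipticity in a normal cone and is unavailable here because $L$ may be entirely degenerate at $P$. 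It is exactly hypothesis \eqref{hopf} — that the operator applied to the signed distance to $\partial D$ has the correct sign — which supplies the substitute barrier.
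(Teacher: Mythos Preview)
Your argument is correct and follows the same overall strategy as the paper: construct a barrier $h$ with $Lh>0$ near $P$, compare $u-u(P)$ with $\epsilon h$ via the weak maximum principle for degenerate elliptic operators, and read off the sign of the normal derivative. The difference is in the choice of barrier. The paper first straightens the boundary by $y_n = x_n - f(x')$ and then takes the quadratic function $h(y) = -\sum_i y_i^2 + \mu y_n$, choosing $\mu$ large so that $\mu\,\beta^n(O)$ dominates $2\sum_i \alpha^{ii}(O)$; you instead use the boundary-defining function $g = x_n - f(x')$ directly and observe that hypothesis \eqref{hopf} is literally the statement $Lg(P)>0$. This avoids both the coordinate change and the auxiliary parameter $\mu$; the paper's quadratic term $-\sum_i y_i^2$ serves only to carve out a bounded comparison region $K=\{h>0\}\cap D_1$, a role that your neighborhood $U'$ already plays. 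Your version is thus a mild streamlining of the same proof.
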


\begin{proof} First note that we may choose the following transformation
\begin{eqnarray}
y_1 = x_1 - x_1(P),~ y_2 = x_2 - x_2(P),~ \cdots , ~y_n = x_n -
f(x_1, \cdots, x_{n-1})
\end{eqnarray}
to map $P$ to $O$ in $y$-coordinates and straighten $\p D$ locally
to $y_n=0$ near the origin $O$. Denote the image of $U$ to be $V$.

It follows that the operator $L$ can be written in the form
\begin{eqnarray}
L &=& \sum_{i,j =1}^n a^{ij}(x)\frac{\p^2}{\p x_i \p x_j} +
\sum_{i=1}^n b^i(x) \frac{\p}{\p x_i} + c \nonumber\\
  &=& \sum_{k, l =1}^n
  \left(\sum_{i,j =1}^n a^{ij}\frac{\p y_k}{\p x_i}\frac{\p y_l}
  {\p x_j}\right)\frac{\p^2}{\p y_k \p
  y_l}\nonumber\\
  &&
  + \sum_{l =1}^n \left(\sum_{i =1}^n b^i \frac{\p y_l}{\p x_i} +
\sum_{i, j =1}^n a^{ij}\frac{\p^2 y_l}{\p x_i \p
x_j}\right)\frac{\p}{\p
y_l} + c\nonumber\\
  & =& \sum_{k,l =1}^n \alpha^{kl}(y)\frac{\p^2}{\p y_k \p y_l} +  \sum_{l = 1}^n
\beta^l(y) \frac{\p}{\p y_l}+ c,
\end{eqnarray}
where $\alpha^{kl}(y): = \sum_{i,j =1}^n a^{ij}\frac{\p y_k}{\p
x_i}\frac{\p y_l}{\p x_j}$ and $ \beta^l  := \sum_{i =1}^n b^i
\frac{\p y_l}{\p x_i} + \sum_{i, j =1}^n a^{ij}\frac{\p^2 y_l}{\p
x_i \p x_j}$.

Hence by the assumption \eqref{hopf} we have
\begin{eqnarray}
\beta^n (O) &=& \left(\sum_{i =1}^n b^i \frac{\p y_n}{\p x_i} +
\sum_{i, j =1}^n a^{ij}\frac{\p^2 y_n}{\p x_i \p
x_j}\right)(P)\nonumber\\
&=& \left(- \sum_{i =1}^{n-1} b^i \frac{\p f}{\p x_i} + b^n -
\sum_{i,j =1}^{n-1} a^{ij} \frac{\p^2 f}{\p x_i \p
x_j}\right)(P)\label{0110}\nonumber \\
&>& 0.
\end{eqnarray}
Now we introduce an auxiliary function $h$ of the form
\begin{eqnarray}
h(y) = - \sum_{i =1}^n y_i^2  + \mu y_n,
\end{eqnarray}
where $\mu$ is a positive constant yet to be determined. A direct
calculation gives
\begin{eqnarray}
L h (O)= - 2\sum_{i =1}^n \alpha^{ii}(O) + \mu \beta^n (O)
\end{eqnarray}
and
\begin{eqnarray}
\frac{\p h}{\p y_n}(O) =  \mu > 0.
\end{eqnarray}
Note that we have $\beta^n(O) > 0$. Hence $\mu$ may be chosen large
enough so that $Lh(O)>0$.

Let us pick up a small rectangle $D_1: = (-d_1, d_1) \times (-d_2,
d_2)\times ...\times (0,c) \subset V$, with $O \in \p D_1$ and $Lh
> 0$  through out the domain $D_1$. Corresponding to the function
$h$ there is an open set $ B = \{y \in \mathbb{R}^n: h(y) > 0\}$.
One sees from the construction of $h$ that $ K := D_1 \cap B$ is
non-empty, and the boundary of $K$ can be divided into two parts:
$K_1:= \p K \cap \p D_1$, $K_2:= \p K \cap \p B$, with $O \in K_2$.
Now since $ u-u(O)
> 0 $ on $K_1,$ there is a constant $\epsilon > 0$ for which the function
$w:= (u-u(O)) - \epsilon h \geq 0$ on $K_1$. This inequality is also
satisfied on $K_2$ where $h = 0$. Thus we have $Lw = Lu -cu(O)-
\epsilon L h \leq - \epsilon L h < 0$ in $K$, and $ w \geq 0$ on $\p
K$. The weak minimum principle (see the first paragraph of the proof
of Theorem 3.1 in \cite{GT}; it is here we need the matrix
$(a^{ij})$, hence $(\alpha^{kl})$, to be semi-positive definite) now
implies that $ w \geq 0$ in $K$. Taking the interior normal
derivative at the point $O$, where $w(O)=0$, we have $ \frac{\p
w}{\p y_n}(O) \geq 0, $ which implies
\begin{eqnarray}
 \frac{\p u}{\p y_n}(O) \geq \epsilon \frac{\p h}{\p
y_n}(O) = \epsilon \mu > 0.
\end{eqnarray}
Therefore we get, in the original $x$-coordinates,
\begin{eqnarray}
\frac{\p u}{\p\nu}(P)&=&\left(1+\sum_{i=1}^{n-1}\left(\frac{\p f}{\p
x_i}\right)^2\right)\frac{\p u}{\p y_n}(O)-\sum_{i=1}^{n-1}\frac{\p
u}{\p y_i}(O)\frac{\p f}{\p x_i}(P)\nonumber\\
&>&0\label{a14}
\end{eqnarray}
as required, since  there should hold $\frac{\p u}{\p y_i}(O)=0$ for
$i=1,\cdots, n-1$ due to the fact that $u$ attains a minimum at $O$.

The case that $u$ attains a strict maximum at $P\in\p D$ can be
proved by applying the above proved result to $-u$. For $c\le 0,$
the proof is similar.
\end{proof}

\begin{remark}
As pointed out in \cite{GT} (p. 34), \eqref{a4} can be replaced by
\begin{eqnarray}
\liminf_{x\rightarrow P}\frac{|u(x)-u(P)|}{|x-P|}>0\ (<0),
\end{eqnarray}
where the angle between the vector $x-P$ and the inner normal vector
of $\p D$ at $P$ is less than $\pi/2-\delta$ for some fixed positive
number $\delta.$ We require $u\in C^1(\bar{D})$ is in essence just
to obtain that the tangential  derivatives of $u$ along $\p D$   are
zero at $P,$ which is used to derive \eqref{a14}.
\end{remark}

\subsection{Proof of Theorem \ref{thm1}}

We first prove part $(\rmnum{1})$ of Theorem \ref{thm1}.

Suppose $\fei$ is a $C^2$ solution to problem
\eqref{0101}--\eqref{0104}. Then by \eqref{0101}, it is
straightforward to check that $\psi=\fei_b-\fei$ satisfies the
following equation
\begin{eqnarray}\label{le}
\sum_{i,j=1}^2 a^{ij}(\fei_b)\p_{ij}\psi+ b^1\p_1\psi+b^2\p_2\psi=0,
\end{eqnarray}
where
\begin{eqnarray}
&&a^{11}(\fei_b)=n(x)^2(c_b^2-(\p_1\fei_b)^2), \\
&&a^{12}(\fei_b)=a^{21}(\fei_b)=-\p_1\fei_b\p_2\fei_b=0,\\
&&a^{22}(\fei_b)=c_b^2-\frac{1}{n(x)^2}(\p_2\fei_b)^2=c_b^2,
\end{eqnarray}
with $c_b=\sqrt{c_0-\frac{\gamma-1}{2}(\p_1 \fei_b)^2}$ the sonic
speed corresponding to $\fei_b$, and
\begin{eqnarray}
&& b^1=-\left(\frac{\gamma+1}{2}n(x)^2\p_{11}\fei+\frac{\gamma-1}{2}
\p_{22}\fei\right)\p_1(\fei+\fei_b)\nonumber\\
&&\qquad
+n(x)n'(x)\left(c_b^2+\frac{\gamma-1}{2}\p_1\fei\p_1(\fei+\fei_b)\right),\\
&&b^2=\frac{\gamma-1}{2}\p_2\fei\p_{11}\fei+2\p_1\fei\p_{12}\fei+
\frac{\gamma+1}{2}\frac{1}{n(x)^2}\p_2\fei\p_{22}\fei\nonumber\\
&&\qquad+\frac{\gamma-3}{2}\frac{n'(x)}{n(x)} \p_1\fei\p_2\fei.
\end{eqnarray}
Recalling that $\fei_b$ is a subsonic--sonic flow, so equation
\eqref{le} is a linear degenerate elliptic equation of $\psi$: It is
strictly elliptic in $\mathcal{M}\cup\Sigma_0$ and degenerate on
$\Sigma_1$.

The boundary conditions of $\psi$ are
\begin{eqnarray}
&l_0\cdot D\psi=0 & \text{on}\ \ \Sigma_0,\label{lc0}\\
&l_1\cdot D\psi=0 & \text{on}\ \ \Sigma_1,\label{lc1}
\end{eqnarray}
where $D\psi=(\p_1\psi,\  \p_2\psi)$ and
$$l_0=(\p_1(\fei+\fei_b),\ \p_2\fei/n(0)^2),\ \
l_1=(\p_1(\fei+\fei_b),\ \p_2\fei).$$ By $(H_2)$ and the fact
$\p_1\fei_b>0$, \eqref{lc0} and \eqref{lc1} are both
oblique derivative conditions.\\

Now we apply maximum principles to the problem \eqref{le},
\eqref{lc0} and \eqref{lc1}. If $\psi$ is not a constant, by strong
maximum principle which is valid in any compact subset of
$\mathcal{M}$, the maximum and minimum of $\psi$ can only be
achieved on $\Sigma_0$ or $\Sigma_1$. By the classical Hopf boundary
point lemma (Lemma 3.4 in \cite{GT}), it is only possible to achieve
the extremum on $\Sigma_1$, i.e., the boundary where the equation is
degenerate.

To show $\psi$ is a constant, by Theorem \ref{thm2} and an argument
of contradiction, we just need to show the validity of \eqref{hopf}.
In our situation, it is exactly
\begin{eqnarray}\label{224}
b^1<0\quad \text{on}\ \Sigma_1.
\end{eqnarray}
The calculation is straightforward and we omit it (c.f. \eqref{0110}
and also note that $y_n=-(x-1)$ now).

Since $\p_1\fei_b>0,\ \p_1\fei\ge0$ and $n'(1)=0, n(1)=1$ by
$(H_1)(H_2)$, \eqref{224} is guaranteed by
\begin{eqnarray}\label{key}
\frac{\gamma+1}{2}\p_{11}\fei+\frac{\gamma-1}{2} \p_{22}\fei>0 \quad
\text{on}\ \Sigma_1.
\end{eqnarray}
We show this is true under the assumption \eqref{acc}.

In fact, by the equation \eqref{0101}, we obtain that on $\Sigma_1,$
\begin{eqnarray}\label{225}
c^2\Delta\fei=(\p_1\varphi)^2\p_{11}\varphi+2\p_1\varphi\p_2\varphi\p_{12}\varphi
+(\p_2\varphi)^2\p_{22}\varphi.
\end{eqnarray}
Differentiating the boundary condition \eqref{0104} with respect to
$y$ and multiplying $\p_2\fei$, it follows
\begin{eqnarray}
\p_1\fei\p_2\fei\p_{12}\fei+(\p_2\fei)^2\p_{22}\fei=0.
\end{eqnarray}
Substituting this in \eqref{225} and note that
$c^2=(\p_1\fei)^2+(\p_2\fei)^2,$  then
\begin{eqnarray}
(c^2+(\p_2\fei)^2)\Delta\fei=c^2\p_{11}\fei>0.
\end{eqnarray}
So $\Delta\fei>0$ on $\Sigma_1$ and \eqref{key} follows. (Note that
$c=b_1>0$ on $\Sigma_1.$) This finishes the proof of part
$(\rmnum{1})$ of Theorem \ref{thm1}. \\

We continue to prove part $(\rmnum{2})$ of Theorem \ref{thm1}. The
only difference is the boundary condition \eqref{lc0}. Now it should
be written as
\begin{eqnarray}
l_0\cdot D\psi=b_0^2-B(y) & \text{on}\ \ \Sigma_0.\label{lc001}
\end{eqnarray}
We consider the case $B\ge b_0^2.$ Suppose $\psi$ is not a constant.
Then by the classical Hopf lemma, the minimum of $\psi$ can only be
achieved on $\Sigma_1.$ But this is contradictory to \eqref{224} and
Theorem \ref{thm2}. So $\psi$ must be a constant. Hence on
$\Sigma_0$ there should hold $b_0^2\equiv B(y).$ However, we have
assumed that $B$ is not identical to $b_0^2$, so the only
possibility is that there is no any $C^2$ solution of the problem
\eqref{0101}, \eqref{0102}, \eqref{newentry} and \eqref{0104}. The
case $B\le b_0^2$ can be proved similarly by considering the maximum
of $\psi$.

This finished the proof of Theorem \ref{thm1}.

\begin{remark}
We see the equation \eqref{le} is reduced to
\begin{eqnarray}
c_b^2\p_{22}\psi+b^1\p_1\psi+b^2\p_2\psi=0
\end{eqnarray}
on $\Sigma_1.$ So \eqref{224} implies that this equation is similar
to a heat equation. This type of degeneracy also occurs, for
example, in shock reflection phenomena (c.f. \cite{CF}).
\end{remark}

\bigskip
{\bf Acknowledgments.} P. Liu is supported in part by the National
Science Foundation of China under Grant No.\,10601017 and 10871126.
H. Yuan is supported in part by NNSF of China under grant
(10901052), Shanghai Chenguang Program (09CG20), a Special Research
Fund for Selecting Excellent Young Teachers of the Universities in
Shanghai sponsored by Shanghai Municipal Education Commission, and
the National Science Foundation (USA) under Grant DMS-0720925. The
authors thank sincerely the referees for providing many valuable
comments and suggestions!


\end{document}